\documentclass[11pt,reqno]{amsart}
\usepackage{tikz}
\textheight    23cm
\textwidth     15.cm
\addtolength{\textheight}{-0.75in}
\oddsidemargin   .4cm
\evensidemargin  .4cm
\parskip 6pt
\usepackage{subfig}
\usepackage{epstopdf}
\usepackage{epsfig}
\usepackage{math}
\graphicspath{{./figures/}}
\usepackage{tikz}
\usetikzlibrary{shapes,arrows}
\tikzstyle{decision} = [diamond, draw, fill=blue!20, 
    text width=4.5em, text badly centered, node distance=3cm, inner sep=0pt]
\tikzstyle{block} = [rectangle, draw, fill=blue!20, 
    text width=5em, text centered, rounded corners, minimum height=4em]
\tikzstyle{line} = [draw, -latex']
\tikzstyle{cloud} = [draw, ellipse,fill=red!20, node distance=3cm,
    minimum height=2em]

\usetikzlibrary{positioning}
\tikzset{main node/.style={circle,fill=blue!20,draw,minimum size=1cm,inner sep=0pt},  }

\begin{document}
\title{A Fast algorithm for Earth Mover's Distance based on optimal transport and $L_1$ type Regularization}

\email{wgangbo@math.ucla.edu}
\author[Li]{Wuchen Li}
\email{wcli@math.ucla.edu}
\author[Osher]{Stanley Osher}
\email{sjo@math.ucla.edu }
\author[Gangbo]{Wilfrid Gangbo}
\address{Department of Mathematics, University of California, Los Angeles.}

\thanks{This work is partially supported by ONR grants N000141410683, N000141210838 and DOE grant DE-SC00183838.}

\keywords{Earth Mover's distance; Optimal transport; Compressed sensing; Primal-dual algorithm; $L_1$ regularization.}

\maketitle
\begin{abstract}
We propose a new algorithm to approximate the Earth Mover's distance (EMD). Our main idea is motivated by the theory of optimal transport, in which EMD can be reformulated as a familiar $L_1$ type minimization.
We use a regularization which gives us a unique solution for this $L_1$ type problem. The new regularized minimization is very similar to problems which have been solved in the fields of compressed sensing and image processing, where several fast methods are available. In this paper, we adopt a primal-dual algorithm designed there, which uses very simple updates at each iteration and is shown to converge very rapidly. 
Several numerical examples are provided.
\end{abstract}

\markboth{Li, Osher, Gangbo}{Fast algorithms for Earth Mover's distance  }
\section{Introduction}

In this paper we propose a new algorithm to approximate the Earth Mover's distance (EMD), which is motivated by the theory of optimal transport and methods related to those used in compressed sensing and image processing.

We begin by reviewing some well known facts. 
EMD, which is also named the Monge problem or the Wasserstein metric, plays a central role in many applications, including image processing, computer vision and statistics e.t.c \cite{ stats, M1, W1, Sol1}. 
The EMD is a particular metric defined on the probability space of a convex, compact set $\Omega\subset \mathbb{R}^d$. 
Given two probability densities $\rho^0$, $\rho^1$ in a probability set $\mathcal{P}(\Omega)$, where \begin{equation*}
\mathcal{P}(\Omega)=\{\rho(x)\in L^1(\Omega) ~:~\int_{\Omega}\rho(x)dx=1,\quad \rho(x)\geq 0\}\ .
\end{equation*} 
The EMD deals with following (linear) minimization problem
\begin{equation}\label{EMD}
\begin{split}
EMD(\rho^0, \rho^1):=\min_\pi\quad \int_{\Omega\times \Omega}d(x,y) \pi(x,y)dxdy 
\end{split}
\end{equation}
with the constraint that the joint measure (also called the transport function) $\pi(x,y)$ has $\rho^0(x)$ and $\rho^1(y)$ as marginals, i.e. 
\begin{equation*}
\int_{\Omega}\pi(x,y)dy=\rho^0(x)\ ,\quad \int_{\Omega}\pi(x,y)dx=\rho^1(y)\ , \quad \pi(x,y)\geq 0\ .
\end{equation*}
Here $d$ is a distance function on $\mathbb{R}^d$ which we call the ground metric. In this paper,  we consider the ground metric either be the Euclidean distance ($L_2$) \cite{BC1, BC2} or the Manhattan distance ($L_1$) \cite{Alg1}. I.e. $d(x,y):=\|x-y\|_2$ or $\|x-y\|_1$. 
We call \eqref{EMD} with the $L_1$, $L_2$ ground metric the EMD-$L_1$, EMD-$L_2$.

In recent years, \eqref{EMD} has been well studied by the theory of optimal transport \cite{am2006, Gangbo1, S2009, vil2003}. The theory (remarkably) points out that \eqref{EMD} is equivalent to a new minimization problem
\begin{equation}\label{W1}
EMD(\rho^0,\rho^1)=\inf_{m}\{ \int_{\Omega}\|m(x)\| dx~ :~\nabla \cdot m(x)+\rho^1(x)-\rho^0(x)=0 \}\ ,
\end{equation}
where $\|\cdot\|$ is $2$-norm ($L_2$ ground metric) or 1-norm ($L_1$ ground metric) in $\mathbb{R}^d$ and $m: ~\Omega\rightarrow \mathbb{R}^d$ is a flux vector satisfying a zero flux condition. I.e. $m(x)\cdot \nu(x)=0$, where $\nu(x)$ is the unit normal vector for the boundary of $\Omega$.

The minimization \eqref{W1} has an interesting fluid dynamics interpretation. 
It turns out that \eqref{W1} can be viewed as the following optimal control problem  
\begin{equation*}
\inf_{\rho, m}\{\int_0^1\int_{\Omega} \|m(t,x)\| dx dt~:~\frac{\partial \rho}{\partial t}+\nabla\cdot m=0\ ,\quad \rho(0)=\rho^0,\quad \rho(1)=\rho^1 \ \}\ ,
\end{equation*}
where the minimum is taken among all possible flux functions $m(t,x)$, such that the probability density function is moved continuously in time, from $\rho^0$ to $\rho^1$. 
The optimal control problem has many minimizers. One of them is such that $\rho(t,x)=t\rho^1+(1-t)\rho^0$. Here the flux function $m(t,x)$ does not depend on the time and $\nabla\cdot m=-\frac{\partial \rho}{\partial t}=\rho^0-\rho^1$, so that the control problem becomes \eqref{W1}, see \cite{bb, Mc, vil2003} for details related to EMD-$L_2$ and \cite{below1, below2} for similar optimal transport problems related to EMD-$L_1$.

The formulation \eqref{W1} has two benefits numerically. First, the dimension in \eqref{W1} is lower than the one in the original problem \eqref{EMD}. Suppose we discretize $\Omega$ by a grid with $N$ nodes. Since the unknown variable $\pi(x,y)$ in \eqref{EMD} is supported on $\Omega\times \Omega$ and $m(x)$ in \eqref{EMD} only depends on $\Omega$, the number of grid points used in \eqref{W1} is $N$ while the number needed in \eqref{EMD} is $N^2$. Second, \eqref{W1} is an $L_1$-type minimization problem, which shares its structure with many problems in compressed sensing and image processing. It is possible to borrow a very fast and simple algorithm used there to solve EMD, see e.g. \cite{Osher2,  Osher3, Osher1}.

In this paper, we propose a new algorithm for EMD, leveraging the structure of the formulation \eqref{W1}. The algorithm mainly uses a finite volume method to discretize the domain $\Omega$ and then applies the framework of primal-dual iterations designed in \cite{pock1, pock2}. We overcome the lack of strict convexity of \eqref{W1} (EMD-$L_1$) by a regularization.
Since the regularized minimization is a perturbation of a homogenous degree one minimization, our algorithm inherits all the benefits of the primal-dual algorithm:
First, we use a shrink operator at each step, which handles the sparsity easily, see e.g. \cite{Osher2};   
Second, the algorithm converges rapidly and each step involves very simple formulae. Thus the complexity of the algorithm is very low and the program is very simple.

EMD has been shown to be effective in applications \cite{com1} and many linear programming techniques have been proposed, see e.g. \cite{linear2, Alg1} and many references therein. 
Recently, the authors in \cite{BC1, BC2, Sol1} used the Alternating Direction Method of Multipliers (ADMM), which can also solve the EMD with a general Finsler ground metric. 
We use the primal-dual algorithm rather than ADMM. So we do not need to solve an elliptic problem (i.e. the inverting of a Laplacian) and every iteration is explicit. Our updates are quite simple while ADMM might need fewer iterations. In addition, it is clear that the ADMM is difficult to parallelize while ours is quite easy. This means, with a parallel computer, the algorithm we propose can be made much faster.
The primal-dual algorithm has been used before in optimal transport. The authors in \cite{MFG} use it to compute the stationary solution of mean field games. However, the problem we consider is totally different. 
The emphasis of this paper is that we design simple and fast algorithms for EMD-$L_1$, EMD-$L_2$.

The outline of this paper is as follows. In section 2, we propose a primal-dual algorithm for \eqref{W1} on a uniform grid. We analyze the algorithm in section 3. Several numerical examples are presented in section 4.  
\section{Algorithm}
The EMD problem, as presented in \eqref{W1}, has similar structure to many homogenous degree one regularized  problems.
In this section we will use a finite volume discretization to approximate \eqref{W1}. The discretized problem becomes an $L_1$-type optimization with linear constraints, which allows us to apply the hybrid primal-dual method designed in \cite{pock1, pock2}. 

We begin with considering EMD-$L_2$. We shall consider a uniform lattice graph $G=(V, E)$ with spacing $\Delta x$ to discretize the spatial domain, where 
$V$ is the vertex set  
\begin{equation*}
V=\{1,2,\cdots, N\}\ ,
\end{equation*} 
and $E$ is the edge set. 
Here $i=(i_1, \cdots, i_d)\in V$ represents a point in $\mathbb{R}^d$. 

We consider a discrete probability set supported on all vertices:
\begin{equation*}
\mathcal{P}(G)=\{(p_i)_{i=1}^N\in \mathbb{R}^{N}\mid \sum_{i=1}^Np_i=1\ ,~p_i\geq 0\ ,~i\in V \}\ ,
\end{equation*}
where $p_i$ represents a probability at node $i$, i.e. $p_i=\int_{C_i} \rho(x)dx$, $C_i$ is a cube centered at $i$ with length $\Delta x$. So $\rho^0(x)$, $\rho^1(x)$ is approximated by $p^0=(p^0_i)_{i=1}^N$ and $p^1=(p^1_i)_{i=1}^N$.

We use two steps to consider the EMD on $\mathcal{P}(G)$. 
We first define a flux on a lattice. Denote a matrix $m=(m_{i+\frac{1}{2}})_{i=1}^N\in \mathbb{R}^{N\times d}$, where each component $m_{i+\frac{1}{2}}$ is a row vector in $\mathbb{R}^d$, i.e. 
$$m_{i+\frac{1}{2}}=(m_{i+\frac{1}{2}e_v})_{v=1}^d=(\int_{C_{i+\frac{1}{2}e_v}}m^v(x)dx)_{v=1}^d\ ,$$
where $e_v=(0,\cdots, \Delta x,\cdots, 0)^T$, $\Delta x$ is at the $v$-th column. In other words, if we denote $i=(i_1, \cdots, i_d)\in \mathbb{R}^d$ and $m(x)=(m^1(x), \cdots, m^d(x))$, then 
\begin{equation*}
m_{i+\frac{1}{2}e_v}\approx m^v(i_1,\cdots, i_{v-1}, i_v+\frac{1}{2}\Delta x, i_{v+1},\cdots, i_d)\Delta x^d\ .
\end{equation*}
We consider a zero flux condition. So if a point $i+\frac{1}{2}e_v$ is outside the domain $\Omega$, we let $m_{i+\frac{1}{2}e_v}=0$. Based on such a flux $m$, we define a discrete divergence operator 
$\textrm{div}_G(m):=(\textrm{div}_G(m_i))_{i=1}^N$, where
\begin{equation*}
\textrm{div}_G(m_i):=\frac{1}{\Delta x}\sum_{v=1}^d (m_{i+\frac{1}{2}e_v}-m_{i-\frac{1}{2}e_v})\ .
\end{equation*}
We next introduce the discrete cost functional 
\begin{equation*}
\|m\|:=\sum_{i=1}^N\|m_{i+\frac{1}{2}}\|_{2}=\sum_{i=1}^N \sqrt{\sum_{v=1}^d |m_{i+\frac{e_v}{2}}|^2}\ .
\end{equation*}

To summarize, \eqref{W1} forms an optimization problem 
\begin{equation*}
\begin{aligned}
& \underset{m}{\text{minimize}}
& &   \|m\|  \\
& \text{subject to}
& & \textrm{div}_G(m)+p^1-p^0=0\ ,
\end{aligned}
\end{equation*}
  which can be written explicitly as 
\begin{equation}\label{W1new}
\begin{aligned}
& \underset{m}{\text{minimize}}
& &  \sum_{i=1}^N \sqrt{\sum_{v=1}^d |m_{i+\frac{e_v}{2}}|^2} \\
& \text{subject to}
& & \frac{1}{\Delta x}\sum_{v=1}^d (m_{i+\frac{1}{2}e_v}-m_{i-\frac{1}{2}e_v})+p_i^1-p_i^0=0\ , \quad i=1,\cdots, N,~ v=1,\cdots, d\ .
\end{aligned}
\end{equation}
We observe that \eqref{W1new} is an optimization problem, which is very similar to some problems in compressed sensing and image processing e.g. \cite{Osher2}, whose cost functional is convex and whose constraints are linear. Thus we solve \eqref{W1new} by looking at its saddle point structure. Denote $\Phi=(\Phi_i)_{i=1}^N$ as \eqref{W1new}'s Lagrange multiplier, thus we have \begin{equation}\label{saddle}
\min_{m}\max_{\Phi} \quad L(m, \Phi):=\min_{m}\max_{\Phi} \quad \|m\|+\Phi^T(\textrm{div}_G(m)+p^1-p^0)\ .
\end{equation}

Saddle point problems, such as \eqref{saddle}, are well studied by the first order primal-dual algorithm \cite{pock1, pock2}. The iteration steps are as follows:
\begin{equation}\label{iteration}
\begin{cases}
m^{k+1}=&\arg\min_{m} \|m\|+(\Phi^k)^T \textrm{div}_G(m)+\frac{\|m-m^k\|^2_2}{2\mu} \ ;\\
\Phi^{k+1}=&\arg\max_{\Phi} \Phi^T \big(\textrm{div}_G(m^{k+1}+\theta (m^{k+1}-m^k))+p^1-p^0\big)-\frac{\|\Phi-\Phi^k\|^2_2}{2\tau} \ ,
\end{cases}
\end{equation}
where $\mu$, $\tau$ are two small step sizes, $\theta\in [0, 1]$ is a given parameter, $\|m-m^k\|^2_2=\sum_{i=1}^N\sum_{v=1}^d(m_{i+\frac{1}{2}e_v}-m_{i+\frac{1}{2}e_v}^k)^2$ and $\|\Phi-\Phi^k\|^2_2=\sum_{i=1}^N(\Phi_i-\Phi_i^k)^2$. 
These steps are alternating a gradient ascent in the dual variable $\Phi$ and a gradient descent in the primal variable $m$. 

It turns out that iteration \eqref{iteration} can be solved by simple explicit formulae. Since the unknown variable $m$, $\Phi$ is component-wise  separable in this problem, each of its components $m_{i+\frac{1}{2}}$, $\Phi_i$ can be independently obtained by solving \eqref{iteration}. 

First, notice 
\begin{equation*}
\begin{split}
& \min_{m} ~\|m\|+(\Phi^k)^T \textrm{div}_G(m)+\frac{\|m-m^k\|^2_2}{2\mu} \\
=&\min_{m} \sum_{i=1}^N \sum_{v=1}^d \sqrt{m_{i+\frac{e_v}{2}}^2}+\frac{1}{\Delta x}\sum_{i=1}^N \sum_{v=1}^d\Phi_i^k (m_{i+\frac{1}{2}e_v}-m_{i-\frac{1}{2}e_v}) +\frac{\|m-m^k\|^2_2}{2\mu}   \\
=&\sum_{i=1}^N\min_{m_{i+\frac{1}{2}}}\big(\|m_{i+\frac{1}{2}}\|_2-(\nabla_G \Phi_{i+\frac{1}{2}}^k)^Tm_{i+\frac{1}{2}}+\frac{1}{2\mu}\|m_{i+\frac{1}{2}}-m_{i+\frac{1}{2}}^k\|^2_2\big)\ ,
\end{split}
\end{equation*}
where $\nabla_G\Phi^k_{i+\frac{1}{2}}:=\frac{1}{\Delta x}(\Phi^k_{i+e_v}-\Phi_{i}^k)_{v=1}^d$. The first iteration in \eqref{iteration} has an explicit solution, which is: \begin{equation*}
m^{k+1}_{i+\frac{1}{2}}=\textrm{shrink}_2(m_{i+\frac{1}{2}}^k+\mu\nabla_G \Phi^k_{i+\frac{1}{2}}, \mu)\ ,
\end{equation*}
where we define the $\textrm{shrink}_2$ operation 
\begin{equation*}
\textrm{shrink}_2(y, \alpha):=\frac{y}{\|y\|_2}\max\{\|y\|_2-\alpha, 0\}\ ,\quad \textrm{where $y\in \mathbb{R}^d$\ .}
\end{equation*}

Second, consider
\begin{equation*}
\begin{split}
&\max_{\Phi} \Phi^T \textrm{div}_G(m^{k+1}+\theta (m^{k+1}-m^k))-\frac{\|\Phi-\Phi^k\|^2_2}{2\tau} \\
=& \sum_{i=1}^N \max_{\Phi_i}\big\{ \Phi_i [\textrm{div}_G(m_{i}^{k+1}+\theta(m^{k+1}_{i}-m^k_{i}))+p_i^1-p_i^0 ]-\frac{\|\Phi_i-\Phi^k_i\|^2_2}{2\tau}\big\}\ .\\
\end{split}
\end{equation*}
Thus the second iteration in \eqref{iteration} becomes 
\begin{equation*}
\Phi_i^{k+1}=\Phi_i^k+\tau \big\{ \textrm{div}_G(m_{i}^{k+1}+\theta(m^{k+1}_{i}-m^k_{i}))+p_i^1-p^0_i\}\ .
\end{equation*}
We are now ready to state our algorithm.
\begin{tabbing}
aaaaa\= aaa \=aaa\=aaa\=aaa\=aaa=aaa\kill  
   \rule{\linewidth}{0.8pt}\\
   \noindent{\large\bf Primal-Dual for EMD-$L_2$}\\
  \1 \textbf{Input}: Discrete probabilities $p^0$, $p^1$; \\
    \3 Initial guess of $m^0$, step size $\mu$, $\tau$, $\theta\in[0,1]$.\\
  \1 \textbf{Output}: $m$ and EMD value $\|m\|$.\\
   \rule{\linewidth}{0.5pt}\\
1.  \1 for $k=1, 2, \cdots$ \qquad \textrm{Iterates until convergence}\\
2.  \2 $m^{k+1}_{i+\frac{1}{2}}=\textrm{shrink}_2(m_{i+\frac{1}{2}}^k+\mu\nabla_G \Phi^k_{i+\frac{1}{2}}, \mu)$ ;\\
3.  \2  $\Phi_i^{k+1}=\Phi_i^k+\tau \{\textrm{div}_G(m_{i}^{k+1}+\theta(m^{k+1}_{i}-m^k_{i}))+p_i^1-p^0_i\}$ ;\\ 
4.  \1 \End\\
   \rule{\linewidth}{0.8pt}
\end{tabbing}

We next consider EMD-$L_1$.
Similarly, \eqref{W1} forms the following optimization problem 
\begin{equation*}
\begin{aligned}
& \underset{m}{\text{minimize}}
& &   \|m\|_1 \\
& \text{subject to}
& & \textrm{div}_G(m)+p^1-p^0=0\ ,
\end{aligned}
\end{equation*}
  which can be written explicitly as 
\begin{equation}\label{W1new1}
\begin{aligned}
& \underset{m}{\text{minimize}}
& &  \sum_{i=1}^N\sum_{v=1}^d |m_{i+\frac{e_v}{2}}| \\
& \text{subject to}
& & \frac{1}{\Delta x}\sum_{v=1}^d (m_{i+\frac{e_v}{2}}-m_{i-\frac{e_v}{2}})+p_i^1-p_i^0=0\ , \quad i=1,\cdots, n\ ,~ v=1,\cdots, d\ .
\end{aligned}
\end{equation}
We observe that \eqref{W1new1} is an $L_1$ optimization problem, whose cost function is convex and whose constraints are linear.
However, the cost functional in \eqref{W1new1} is not strictly convex, which often implies the existence of multiple minimizers. To deal with this issue, we consider a small quadratic perturbation, through which we pick up a unique solution for a modified problem:
\begin{equation}\label{W1modify}
\begin{aligned}
& \underset{m}{\text{minimize}}
& &    \|m\|_1+\frac{\epsilon}{2}\|m\|_2^2 \\
& \text{subject to}
& & \textrm{div}_G(m)+p^1-p^0=0\ .
\end{aligned}
\end{equation}
Here $\|m\|_2^2=\sum_{i=1}^N\sum_{v=1}^d m_{i+\frac{e_v}{2}}^2$ and $\epsilon$ is a positive scalar. 

From now on, we solve \eqref{W1modify} by looking at its saddle point structure. Denote $\Phi=(\Phi_i)_{i=1}^N$ as \eqref{W1modify}'s Lagrange multiplier, we have \begin{equation}\label{saddle1}
\min_{m}\max_{\Phi} \quad L(m, \Phi):=\min_{m}\max_{\Phi}~\|m\|_1+\frac{\epsilon}{2}\|m\|_2^2+\Phi^T(\textrm{div}_G(m)+p^1-p^0)\ .
\end{equation}

Since $L(\cdot ,\Phi)$ is strictly convex which grows quadratically, and $L(m, \cdot)$ is linear, $L$ admits a saddle point solution.
Again, we solve \eqref{saddle1} by the first order primal-dual algorithm \cite{pock1, pock2}. The iteration steps are as follows:
\begin{equation}\label{iteration1}
\begin{cases}
m^{k+1}=&\arg\min_{m} \|m\|_1+\frac{\epsilon}{2}\|m\|_2^2+(\Phi^{k})^T \textrm{div}_G(m)+\frac{\|m-m^k\|^2_2}{2\mu} \ ;\\
\Phi^{k+1}=&\arg\max_{\Phi} \Phi^T \textrm{div}_G(m^{k+1}+\theta (m^{k+1}-m^k)+p^1_i-p^0_i)-\frac{\|\Phi-\Phi^k\|^2_2}{2\tau} \ .
\end{cases}
\end{equation}

As in the computation of EMD-$L_2$, we use simple exact formulae for \eqref{iteration1}. 
First, the update for $m^{k+1}$ has explicit solution, which acts separately on each component $m^{k+1}_{i+\frac{e_v}{2}}$:
\begin{equation*}
\begin{split}
& \min_{m} ~\|m\|_1+\frac{\epsilon}{2}\|m\|_2^2+(\Phi^k)^T \textrm{div}_G(m)+\frac{\|m-m^k\|^2_2}{2\mu} \\
=&\min_{m} \sum_{i=1}^N \sum_{v=1}^d \big\{|m_{i+\frac{e_v}{2}}|+\frac{\epsilon}{2}m_{i+\frac{e_v}{2}}^2+\frac{1}{\Delta x}\Phi_i^k (m_{i+\frac{e_v}{2}}-m_{i-\frac{e_v}{2}}) +\frac{(m_{i+\frac{e_v}{2}}-m^k_{i+\frac{e_v}{2}})^2}{2\mu}\big\}   \\
=&\sum_{i=1}^N\sum_{v=1}^d\min_{m_{i+\frac{e_v}{2}}}\big\{|m_{i+\frac{e_v}{2}}|+\frac{\epsilon}{2}m_{i+\frac{e_v}{2}}^2-\nabla_{G} \Phi^k_{i+\frac{e_v}{2}}m_{i+\frac{e_v}{2}}+\frac{1}{2\mu}(m_{i+\frac{e_v}{2}}-m_{i+\frac{e_v}{2}}^k)^2\big\}\ ,
\end{split}
\end{equation*}
where $\nabla_{G}\Phi^k_{i+\frac{e_v}{2}}:=\frac{1}{\Delta x}(\Phi^k_{i+e_v}-\Phi^k_{i})$. So the first iteration in \eqref{iteration1} has an explicit solution:
  \begin{equation*}
m^{k+1}_{i+\frac{e_v}{2}}=\frac{1}{1+\epsilon \mu}\textrm{shrink}(m_{i+\frac{e_v}{2}}^k+\mu\nabla_{G} \Phi^k_{i+\frac{e_v}{2}}, \mu)\ ,
\end{equation*}
where we define a shrink operation in $\mathbb{R}^1$ \begin{equation*}
\textrm{shrink}(y, \alpha):=\sign(y)\max\{|y|-\alpha, 0\}=\begin{cases}
 y-\alpha  \quad &\textrm{if $y>\alpha$\ ;}\\
0  \quad &\textrm{if $-a\leq y\leq \alpha$\ ;}\\
y+\alpha \quad & \textrm{if $y<-\alpha$\ .}
\end{cases}
\end{equation*}
Second, the update for $\Phi^{k+1}$ is same as the one in EMD-$L_2$. Since the second iteration in \eqref{iteration1} is identical to the one in \eqref{iteration}. 
\begin{tabbing}
aaaaa\= aaa \=aaa\=aaa\=aaa\=aaa=aaa\kill  
   \rule{\linewidth}{0.8pt}\\
   \noindent{\large\bf Primal-dual method for $\textrm{EMD}-L_1$}\\
  \1 \textbf{Input}: Discrete probabilities $p^0$, $p^1$; \\
    \3 Initial guess of $m^0$, parameter $\epsilon>0$, step size $\mu$, $\tau$, $\theta\in[0,1]$.\\
  \1 \textbf{Output}: $m$ and EMD value $\|m\|_1$.\\
   \rule{\linewidth}{0.5pt}\\
1.  \1 for $k=1, 2, \cdots$ \qquad \textrm{Iterates until convergence}\\
2.  \2 $m^{k+1}_{i+\frac{e_v}{2}}=\frac{1}{1+\epsilon \mu}\textrm{shrink}(m_{i+\frac{e_v}{2}}^k+\mu\nabla_{G} \Phi^k_{i+\frac{e_v}{2}}, \mu)$ ;\\
3.  \2 $\Phi_i^{k+1}=\Phi_i^k+\tau \{\textrm{div}_G(m_{i}^{k+1}+\theta(m^{k+1}_{i}-m^k_{i}))+p_i^1-p^0_i\}$ ;\\ 
4.  \1 \End\\
   \rule{\linewidth}{0.8pt}
\end{tabbing}
\begin{remark}
Here we use the conventional $\textrm{shrink}$ operator for EMD-$L_1$, while we apply what we call a $\textrm{shrink}_{2}$ operator for EMD-$L_2$.
\end{remark}

\section{Numerical analysis}
In this section, we prove that the primal-dual algorithm converges to the minimizer of our discretized minimization \eqref{W1new} or \eqref{W1new1}. 
For illustration, we prove the result for EMD-$L_2$. 
\begin{theorem}
Denote a linear operator $K~:~\mathbb{R}^{N\times d}\rightarrow \mathbb{R}^N$, such that 
\begin{equation*}
Km=(\textrm{div}_G(m_i))_{i=1}^N\ ,
\end{equation*}
and a saddle point of $L$ in \eqref{saddle} as $(m^*, \Phi^*)$. Choose $\theta=1$, $\tau\mu \|K\|_{\infty}^2<1$. Then $m^k$, $\Phi^k$ in iteration \eqref{iteration} converges to $m^*$, $\Phi^*$. 
Moreover, $\Phi^*$ satisfies 
\begin{equation}\label{eikonal}
\|\nabla_G \Phi_{i+\frac{1}{2}}^*\|_2=1\ ,\quad \textrm{if}\quad \|m^*_{i+\frac{1}{2}}\|_2>0\ .
\end{equation}
\end{theorem}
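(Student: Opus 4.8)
The statement splits into two parts, which I would treat separately: the convergence of $(m^k,\Phi^k)$, obtained by recognizing \eqref{iteration} as a Chambolle--Pock primal-dual iteration \cite{pock1,pock2}, and the pointwise identity \eqref{eikonal}, which I would extract from the first-order optimality conditions at a saddle point of $L$.

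For the convergence, the first step is to cast \eqref{saddle} in the standard saddle form $\min_m\max_\Phi\ \langle Km,\Phi\rangle+G(m)-F^*(\Phi)$ with $G(m)=\|m\|$ (a norm, hence proper, convex, lower semicontinuous), $F^*(\Phi)=-\Phi^T(p^1-p^0)$ (affine, hence also proper, convex, lower semicontinuous), and $K=\textrm{div}_G$; a saddle point $(m^*,\Phi^*)$ is assumed to exist. The two lines of \eqref{iteration} are exactly the resolvent steps $m^{k+1}=\mathrm{prox}_{\mu\|\cdot\|}(m^k+\mu\nabla_G\Phi^k)$ and $\Phi^{k+1}=\mathrm{prox}_{\tau F^*}\big(\Phi^k+\tau\,\textrm{div}_G(m^{k+1}+\theta(m^{k+1}-m^k))\big)$ with over-relaxation $\theta=1$ --- this is precisely what the explicit $\textrm{shrink}_2$ and affine updates of Section~2 compute (here $K^*\Phi=-\nabla_G\Phi$ by discrete summation by parts). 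Since the prescribed relation $\tau\mu\|K\|_\infty^2<1$ is a sufficient form of the Chambolle--Pock step-size condition $\tau\mu\|K\|^2<1$ for the Euclidean operator norm, the convergence theorem of \cite{pock1} applies and gives $m^k\to m^*$, $\Phi^k\to\Phi^*$. To reproduce this rather than quote it, I would write the variational inequalities characterizing the two proximal minimizations, test them against $(m^*,\Phi^*)$, add them, and use the saddle inequalities for $L$ to discard the residual gap terms, arriving at
\begin{equation*}
\begin{split}
\frac{\|m^*-m^{k+1}\|_2^2}{2\mu}+\frac{\|\Phi^*-\Phi^{k+1}\|_2^2}{2\tau}
&\le\frac{\|m^*-m^{k}\|_2^2}{2\mu}+\frac{\|\Phi^*-\Phi^{k}\|_2^2}{2\tau}\\
&\quad-\Big(\tfrac{1}{2\mu}\|m^{k+1}-m^k\|_2^2+\tfrac{1}{2\tau}\|\Phi^{k+1}-\Phi^k\|_2^2-\langle K(m^{k+1}-m^k),\Phi^{k+1}-\Phi^k\rangle\Big).
\end{split}
\end{equation*}
Young's inequality together with $\tau\mu\|K\|^2<1$ makes the bracket bound a positive multiple of $\|m^{k+1}-m^k\|_2^2+\|\Phi^{k+1}-\Phi^k\|_2^2$, so $\tfrac1\mu\|m^*-m^k\|_2^2+\tfrac1\tau\|\Phi^*-\Phi^k\|_2^2$ is nonincreasing, the iterates are bounded, and $m^{k+1}-m^k\to0$, $\Phi^{k+1}-\Phi^k\to0$; any limit point is then a fixed point of the two resolvents, hence a saddle point of $L$, and re-running the Fej\'er estimate about that particular limit point promotes subsequential convergence to convergence of the full sequence.

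For \eqref{eikonal}, I would use the discrete summation-by-parts identity $\Phi^T\textrm{div}_G(m)=-\sum_{i=1}^N(\nabla_G\Phi_{i+\frac12})^Tm_{i+\frac12}$ (valid under the zero-flux convention and already implicit in Section~2), so that on the primal side $L(\cdot,\Phi^*)=\sum_i\|m_{i+\frac12}\|_2-\sum_i(\nabla_G\Phi^*_{i+\frac12})^Tm_{i+\frac12}+\mathrm{const}$. Since $(m^*,\Phi^*)$ is a saddle point, $m^*$ minimizes this convex, edge-separable functional, whence $\nabla_G\Phi^*_{i+\frac12}\in\partial\|m^*_{i+\frac12}\|_2$ for each $i$. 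The subdifferential of the Euclidean norm at a nonzero $y\in\mathbb{R}^d$ is the single unit vector $y/\|y\|_2$; therefore, whenever $\|m^*_{i+\frac12}\|_2>0$ we get $\nabla_G\Phi^*_{i+\frac12}=m^*_{i+\frac12}/\|m^*_{i+\frac12}\|_2$ and in particular $\|\nabla_G\Phi^*_{i+\frac12}\|_2=1$, which is \eqref{eikonal}.

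The main obstacle is the convergence of the iterates themselves rather than of their ergodic averages: passing from Fej\'er monotonicity and the asymptotic regularity $m^{k+1}-m^k\to0$, $\Phi^{k+1}-\Phi^k\to0$ to convergence of the whole sequence requires identifying limit points as saddle points, which uses the closedness of the graphs of $\partial\|\cdot\|$ and $\partial F^*$ (equivalently, firm nonexpansiveness of the resolvents) --- exactly the analysis underlying \cite{pock1,pock2}. The remaining ingredients --- matching the $\textrm{shrink}_2$/affine updates to resolvent maps, the zero-flux summation-by-parts bookkeeping, and the comparison between $\|K\|_\infty$ and the Euclidean operator norm --- are routine.
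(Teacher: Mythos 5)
Your proposal is correct and follows essentially the same route as the paper: the convergence part is obtained by putting \eqref{saddle} into the standard form $G(m)+\Phi^TKm-F(\Phi)$ with $G$, $F$ convex and $K$ linear and invoking Theorem~1 of \cite{pock1,pock2}, and the eikonal identity \eqref{eikonal} follows from the first-order optimality condition in $m$ at the saddle point. You supply more detail than the paper does (the Fej\'er-monotonicity sketch behind the Chambolle--Pock theorem, and the subdifferential $\partial\|\cdot\|_2$ formulation in place of the paper's formal derivative $\frac{\partial L}{\partial m_{i+1/2}}=\frac{m^*_{i+1/2}}{\|m^*_{i+1/2}\|_2}-\nabla_G\Phi^*_{i+1/2}$), but the underlying argument is the same.
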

\begin{proof}
First, we only need to show that saddle point problem $L$ satisfies the condition of Theorem 1 in \cite{pock1, pock2}. We rewrite $L$ as 
\begin{equation*}
L(m, \Phi)=G(m)+\Phi^TKm-F(\Phi)\ ,
\end{equation*}
where $G(m)=\|m\|$, $Km=(\textrm{div}_G(m_i))_{i=1}^N$, and $F(\Phi)=\sum_{i=1}^N\Phi_i(p_i^0-p_i^1)$.
It is easy to observe that $G$, $F$ is a convex continuous function and $K$ is a linear operator. From Theorem 1 in \cite{pock1}, we prove the convergence result.

Second, since $\|m^*_{i+\frac{1}{2}}\|_2>0$, we have
\begin{equation*}
0=\frac{\partial L}{\partial m_{i+\frac{1}{2}}}|_{(m^*, \Phi^*)}=\frac{m^*_{i+\frac{1}{2}}}{\|m^*_{i+\frac{1}{2}}\|_2}-\nabla_G \Phi^*_{i+\frac{1}{2}}\ .
\end{equation*}
Thus
\begin{equation*}
1=\|\frac{m^*_{i+\frac{1}{2}}}{\|m^*_{i+\frac{1}{2}}\|_2}\|_2=\|\nabla_G \Phi^*_{i+\frac{1}{2}}\|_2\ .
\end{equation*}
We have proven $\Phi^*$ satisfies \eqref{eikonal}. 
\end{proof}

\begin{remark}
Theorem 1 holds similar for EMD-$L_1$ \eqref{W1new1}. Denote the saddle point of \eqref{W1new1} as ($m^*$, $\Phi^*$).
Then the scalar components of $\Phi^*$ satisfy  
\begin{equation*}\label{eikonal2}
|\nabla_{G}\Phi_{i+\frac{e_v}{2}}^*|=1\ ,\quad \textrm{if}\quad |m^*_{i+\frac{e_v}{2}}|>0\ .
\end{equation*}
Since if $|m_{i+\frac{e_v}{2}}|>0$, we have 
\begin{equation*}
0=\frac{\partial L}{\partial m_{i+\frac{e_v}{2}}}|_{(m^*,\Phi^*)}=\frac{m^*_{i+\frac{e_v}{2}}}{|m^*_{i+\frac{e_v}{2}}|}-\nabla_{G}\Phi_{i+\frac{e_v}{2}}^*\ .
\end{equation*}
Thus $$1=|\frac{m^*_{i+\frac{e_v}{2}}}{|m^*_{i+\frac{e_v}{2}}|}|=|\nabla_{G}\Phi^*_{i+\frac{e_v}{2}}|\ .$$
\end{remark}

From above convergence results, we are ready to show that the computational complexity for this primal-dual algorithm is $O(NM)$, where $M$ is the iteration number for a given error. 
It is known in \cite{pock1}, the algorithm converges with the rate of $O(\frac{1}{M})$. We also have the fact that each iteration has simple updates, which only need $O(N)$ operations. So our method requires overall $O(N)\times O(M)$ computations. 
In practice, we observe good performance, see the next section. This is because of the well known performance of shrink operations as observed in compressed sensing and image processing calculations.

It is also worth mentioning that if $\epsilon=0$, the cost functional $\|m\|_1$ in EMD-$L_1$ is not strictly convex, so 
there may exist multiple minimizers for problem \eqref{W1new1}. 
If $\epsilon>0$, the modified cost functional $\|m\|_1+\frac{\epsilon}{2}\|m\|_2^2$ is strongly convex, and we pick up a unique solution for the perturbed problem \eqref{W1new1}. In next section, we use numerical examples to demonstrate that such a unique solution approximates a particular minimizer of \eqref{W1new1} when $\epsilon$ is sufficient small. 

We do not claim that our discrete approximation converges as $\Delta x\rightarrow 0$ to the solution of (2). However, if as $\Delta x\rightarrow 0$ the family $m$ stays uniformly bounded, then it is easy to show that $m$ converges to a weak solution of (2).
\section{Examples}
In this section, we demonstrate several numerical results on a square $[-2, 2] \times [-2, 2]$. Our discretization used in the Figure 1-5 is a uniform $40\times 40$ lattice. The parameters are chosen as $\mu=\tau=0.025$, $\theta=1$. 
The initial flux $m$ and $\Phi$ are chosen as all zeros. We use the stopping criteria $$\frac{1}{N}\sum_{i=1}^N|\textrm{div}_G(m^k_{i})+p_i^1-p_i^0|\leq 10^{-5}\ .$$

\begin{figure}[H]
\centering
\subfloat[ $\rho^0$.]{\includegraphics[scale=0.25]{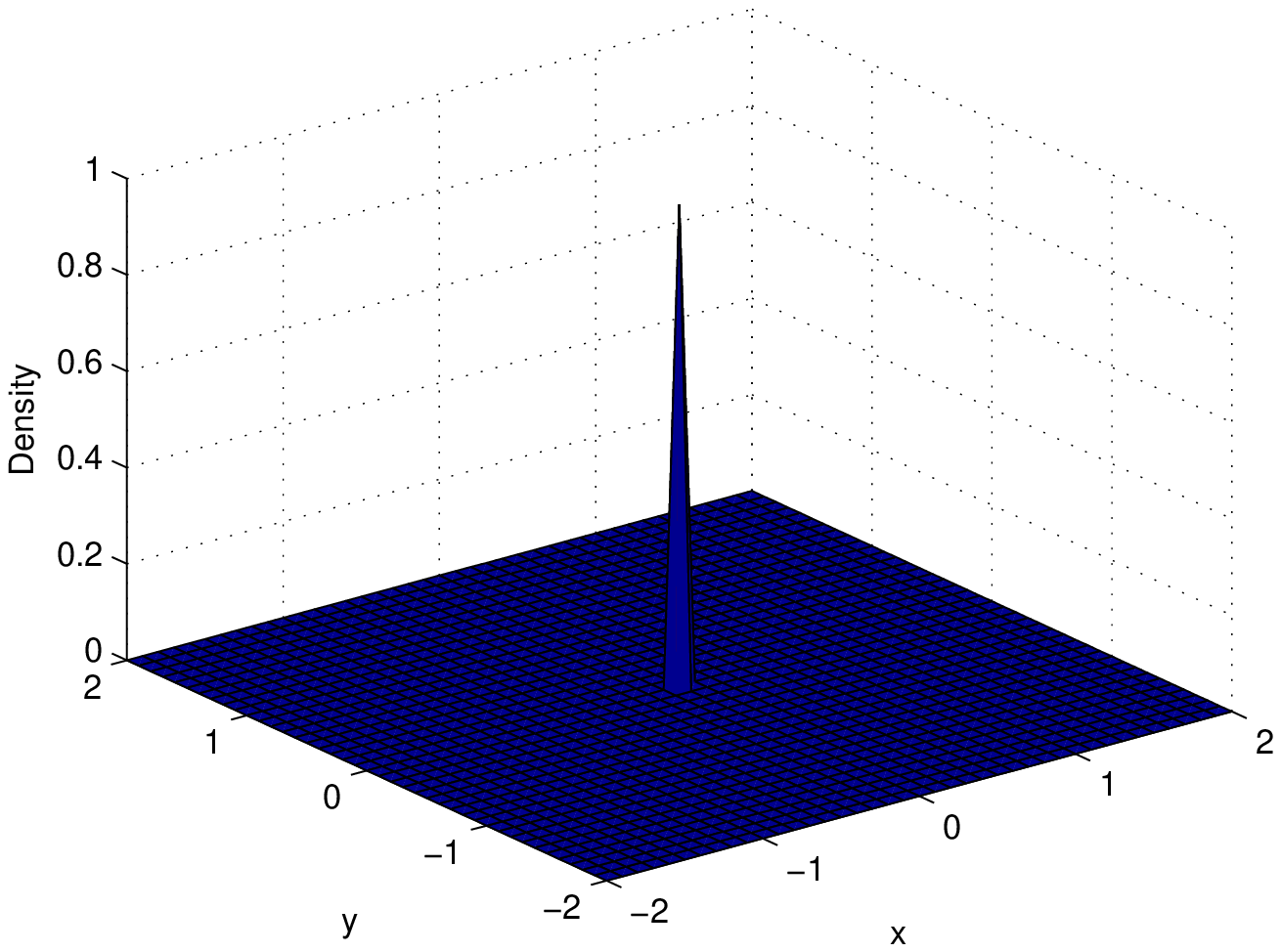}}\hspace{1cm}
\subfloat[ $\rho^1$.]{\includegraphics[scale=0.25]{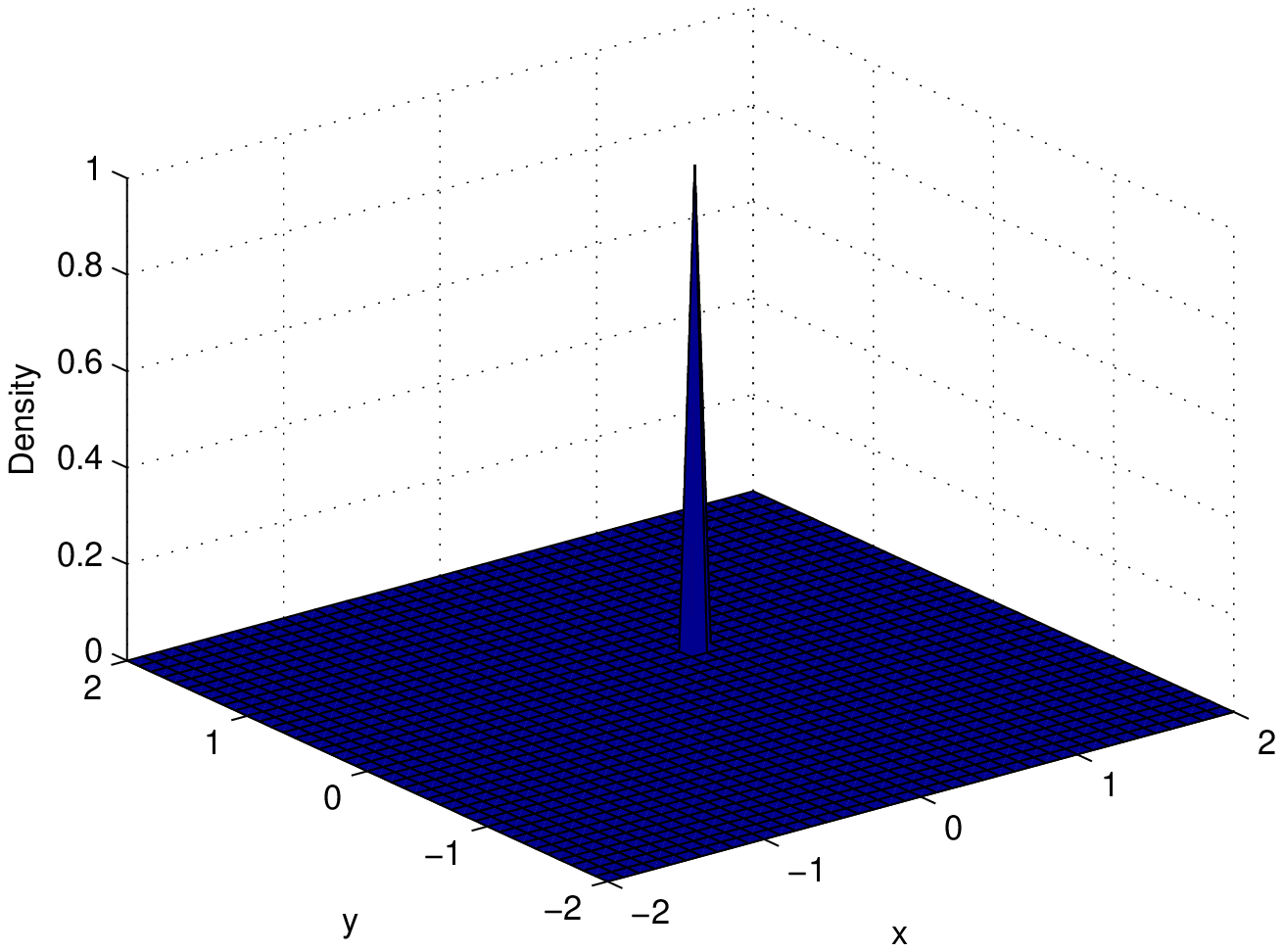}}
\caption{Here $\rho^0$ and $\rho^1$ are concentrated at $(0, 0)$, $(0.4, 0.4)$, i.e. $\rho^0=\delta_{(0, 0)}$, $\rho^1=\delta_{(0.4, 0.4)}$. 
The computed EMD-$L_1$, EMD-$L_2$ is $0.7981$, $0.6232$} \label{fig1}
\end{figure}

\begin{figure}[H]
\centering
\subfloat[ $\rho^0$.]{\includegraphics[scale=0.25]{1}}\hspace{1cm}
\subfloat[ $\rho^1$.]{\includegraphics[scale=0.25]{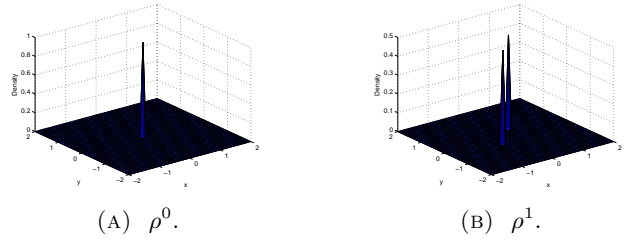}}
\caption{Here $\rho^0$ is concentrated at $(0,0)$, $\rho^1$ is concentrated at two positions, $(0.4,0.4)$ and $(-0.4, -0.4)$, 
i.e. $\rho^0=\delta_{(0,0)}$, $\rho^1=\frac{1}{2} (\delta_{(0.4, 0.4)}+\delta_{(-0.4, -0.4)})$. The computed EMD-$L_1$, EMD-$L_2$ is $0.8016$, $0.6232$.}
\end{figure}

\begin{figure}[H]
\centering
\subfloat[ $\rho^0$.]{\includegraphics[scale=0.25]{1}}\hspace{1cm}
\subfloat[ $\rho^1$.]{\includegraphics[scale=0.25]{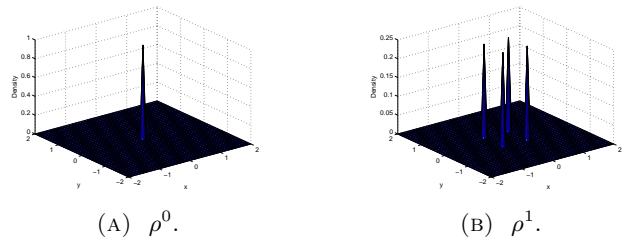}}
\caption{Here $\rho^0$ is concentrated at $(0,0)$, $\rho^1$ is concentrated at four positions, $(0.4,0.4)$, $(0.4, -0.4)$, $(-0.4, 0.4)$, $(-0.4, -0.4)$, 
i.e. $\rho^0=\delta_{(0,0)}$, $\rho^1=\frac{1}{4} (\delta_{(0.4, 0.4)}+\delta_{(0.4, -0.4)}+\delta_{(-0.4, 0.4)}+\delta_{(-0.4, -0.4)})$. The computed EMD-$L_1$, EMD-$L_2$ is $0.8002$, $0.5882$.}
\end{figure}

\begin{figure}[H]
\centering
\subfloat[ $\rho^0$.]{\includegraphics[scale=0.25]{1}}\hspace{1cm}
\subfloat[ $\rho^1$.]{\includegraphics[scale=0.25]{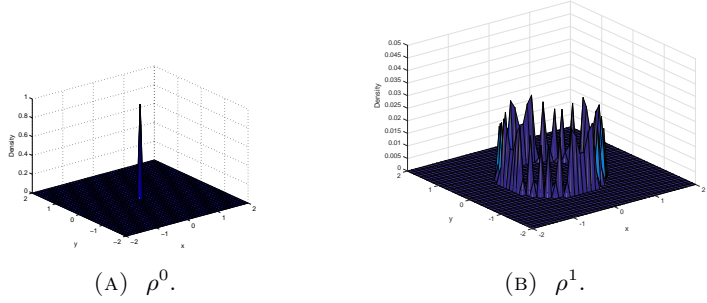}}
\caption{Here $\rho^0$ is concentrated at $(0, 0)$, $\rho^1$ is a measure supported on a circle, i.e.
 $\rho^0=\delta_{(0,0)}$, 
$\rho^1=\frac{1}{K} \big(e^{\frac{x^2+y^2}{\sigma}-\frac{(x^2+y^2)^2}{\sigma} }\big)$, where $K$ is a normalized constants and $\sigma=10^{-3}$.
The computed EMD-$L_1$, EMD-$L_2$ is $0.8794$, $0.6943$. }
\end{figure}

\begin{figure}[H]
\centering
\subfloat[ $\rho^0$.]{\includegraphics[scale=0.25]{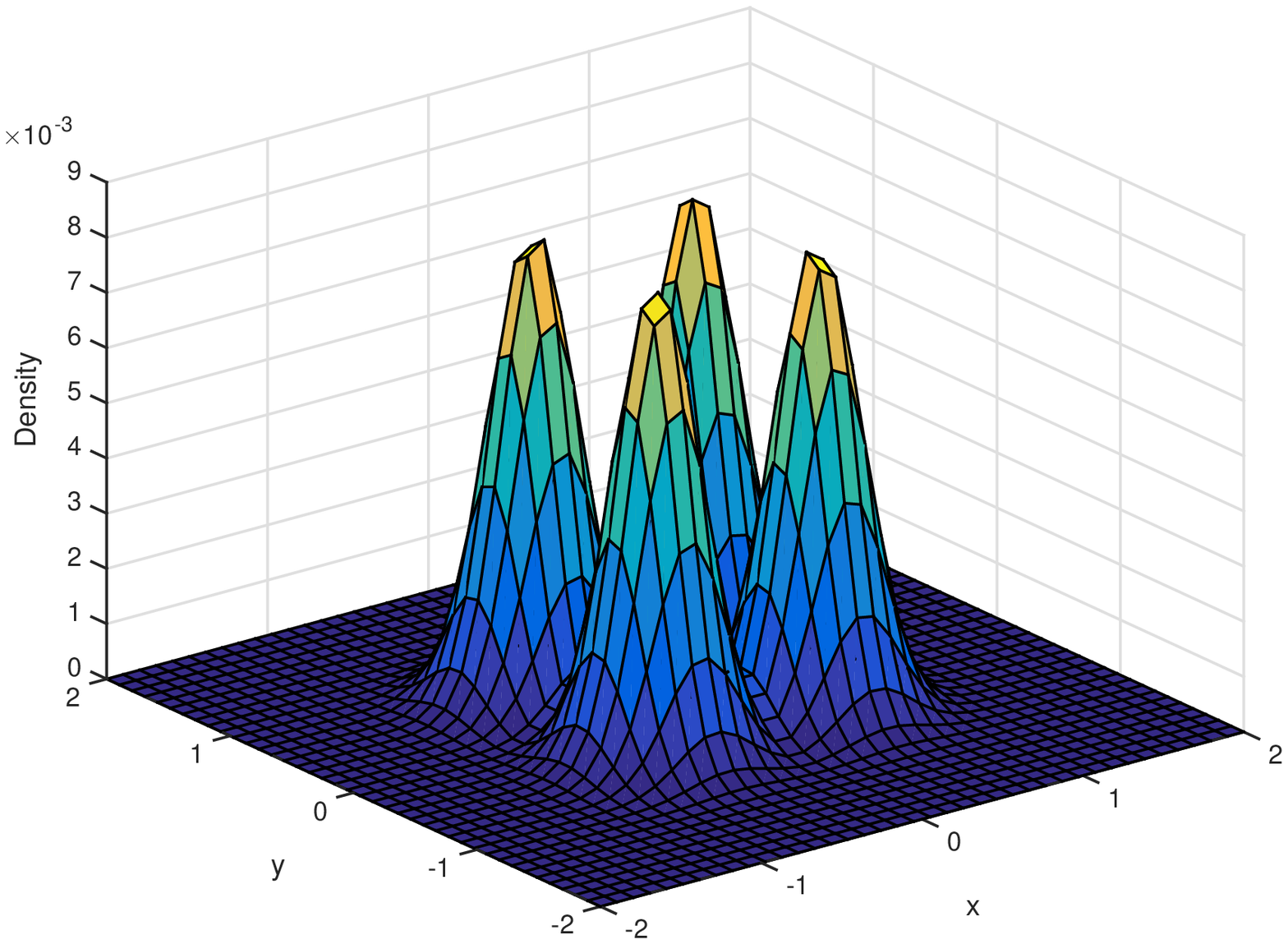}}\hspace{1cm}
\subfloat[ $\rho^1$.]{\includegraphics[scale=0.25]{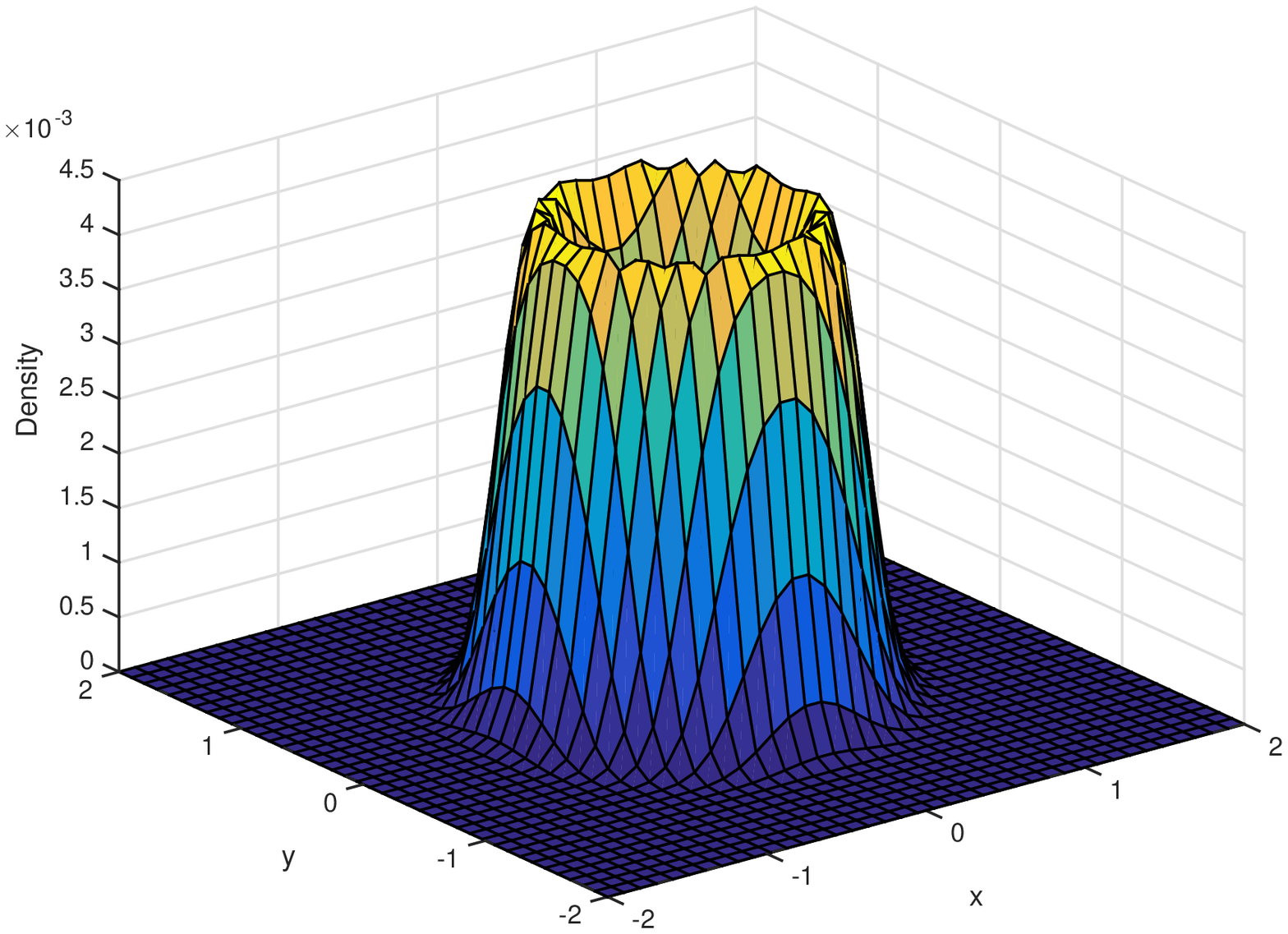}}
\caption{Here $\rho^0=\frac{1}{K_1} e^{-\frac{x^2+y^2-|x|-|y|}{\sigma}}$, 
$\rho^1=\frac{1}{K_2} \big(e^{\frac{x^2+y^2}{\sigma}-\frac{(x^2+y^2)^2}{\sigma} }\big)$, where $K_1$, $K_2$ are normalized constants and $\sigma=0.2$.
The computed EMD-$L_1$, EMD-$L_2$ is $0.1778$, $0.1259$. }
\end{figure}

Table 1, 2 reports the time under different grids for computing EMD-$L_1$, EMD-$L_2$ in Figure 1, 2, 3.
The implementation is done in MATLAB $2016$, on a 2.40 GHZ Intel Xeon processor with 4GB RAM.
\begin{table}
   \textbf{Example 1:}\\
    \begin{tabular}{  | l | l | l | p{4cm} |  }
    \hline
    Grids number (N) & Time (s)  &  Iteration  & Relative Error   \\ \hline
      $100$ &  0.0411 &  58 & 0.2071 \\ \hline
         $400$ & 0.4015 & 167 & 0.1495   \\ \hline
      $1600$ & 4.78 & 524&  0.1021 \\ \hline
   $6400$ &  64.81 & 1802&  0.0607\\  \hline
    \end{tabular}
 \\
 \textbf{Example 2:}\\
    \begin{tabular}{  | l | l | l | p{4cm} |  }
    \hline
    Grids number (N) & Time (s)  &  Iteration  & Relative Error   \\ \hline
      $100$ &  0.089 &  133& 0.2072 \\ \hline
         $400$ & 1.44 & 597 &   0.1497\\ \hline
      $1600$ & 8.41 & 901&    0.1014\\ \hline
   $6400$ &  118.8 & 3001 & 0.0596 \\  \hline
    \end{tabular}
    \\
  \textbf{Example 3:}\\
    \begin{tabular}{  | l | l | l | p{4cm} |  }
    \hline
    Grids number (N) & Time (s)  &  Iteration  & Relative Error   \\ \hline
      $100$ &  0.1334 & 210  & 0.0641 \\ \hline
         $400$ & 1.644 & 689 & 0.0536   \\ \hline
      $1600$ &12.27 & 1347&  0.0386  \\ \hline
   $6400$ &  130.37 & 3590 & 0.0199\\  \hline
    \end{tabular}

    \caption{We compute EMD-$L_2$ for Figure 1, 2, 3. Time is in seconds. The relative error is defined by 
$ \frac{|\|m\|-0.4\sqrt{2}|}{0.4\sqrt{2}}$, 
where $m$ is the computed minimizer of \eqref{W1new} and  $0.4\sqrt{2}$ is the analytical solution of EMD-$L_2$ (Euclidean distance between $(0.4, 0.4)$, $(0, 0)$).}
\end{table}

 \begin{table}
   \textbf{Example 1:}\\
    \begin{tabular}{  | l | l | l | p{4cm} |  }
    \hline
    Grids number (N) & Time (s)  &  Iteration  & Relative Error   \\ \hline
      $100$ &  $0.031$& $198$        &  $2.0\times 10^{-3}$ \\ \hline
         $400$ & $0.197$& $356$     &       $7.5\times 10^{-4}$   \\ \hline
      $1600$ &  $1.669$  & $786$  &    $2.7\times 10^{-4}$  \\ \hline
   $6400$ &   25.178 & $3057$ & $9.1\times 10^{-5}$ \\  \hline
    \end{tabular}
 \\ 
 \textbf{Example 2:}
 
    \begin{tabular}{  | l | l | l | p{4cm} |  }
    \hline
    Grids number (N) & Time (s)  &  Iteration  & Relative Error   \\ \hline
       $100$ &  0.047      &  156 &       $1.0\times 10^{-3}$ \\ \hline
         $400$ & 0.204 &  347  &     $3.8\times 10^{-4}$  \\ \hline
      $1600$ & 1.814   &850&    $1.3\times 10^{-4} $ \\ \hline
   $6400$ &   28.53  & 3483        &      $4.6\times 10^{-5}$ \\  \hline
    \end{tabular}
 \\ 
 \textbf{Example 3:}
 
    \begin{tabular}{  | l | l | l | p{4cm} |  }
    \hline
    Grids number (N) & Time (s)  &  Iteration  & Relative Error   \\ \hline
       $100$ &  0.039   &  142 &       $7.5\times 10^{-4}$ \\ \hline
         $400$ & 0.171  &  261  &     $2.6\times 10^{-4}$  \\ \hline
      $1600$ & 1.678   &803&    $8.9\times 10^{-5} $ \\ \hline
   $6400$ &    31.13   & 3792    &      $2.9\times 10^{-5}$ \\  \hline
    \end{tabular}

    \caption{ We compute EMD-$L_1$ with $\epsilon=0.01$ for Figure 1, 2, 3.
Here the stopping criteria is $\frac{1}{N}\sum_{i=1}^N|\textrm{div}_G(m^k_{i})+p_i^1-p_i^0|\leq 10^{-9}$. The relative error is defined by 
$ \frac{\|m^{\epsilon}\|_1+\epsilon \|m^{\epsilon}\|^2_2-0.8}{0.8}$, 
where $m^{\epsilon}$ is the computed minimizer of \eqref{W1modify} and  $0.8$ is the analytical solution of EMD-$L_1$ (Manhattan distance between $(0.4, 0.4)$, $(0, 0)$).}
\end{table}

We observe that the number of iterations is roughly $O(N)$ (sometimes less). So we claim that the complexity of our algorithm is at most $O(N^2)$, which roughly matches the result of the computation time in table 1 and 2.  
  \begin{table}[H]
    \begin{tabular}{  | l  | l | p{4cm} |  }
    \hline
    Grids number (N) & Time (s) EMD-$L_1$  &  Time (s) in EMD-$L_2$     \\ \hline
       $100$ &  0.0162& 0.1362   \\ \hline
         $400$ & 0.07529 &  1.645 \\ \hline
      $1600$ & 0.90 &  12.265 \\ \hline
   $6400$ & 22.38 & 130.37\\  \hline
    \end{tabular}
    \caption{For Figure 3, we compare the computation time for EMD-$L_1$ and EMD-$L_2$. Here we use the same stopping criteria: $\frac{1}{N}\sum_{i=1}^N|\textrm{div}_G(m^k_{i})+p_i^1-p_i^0|\leq 10^{-5}$.}
\end{table}

We also observe that we get approximately $10$ times faster speed for EMD-$L_1$ than EMD-$L_2$ in table 3. This is expected, since it is expensive to compute square roots for the Euclidean ground metric.
\begin{figure}[H]
\centering
\subfloat[  $\rho^0$.]{\includegraphics[scale=0.25]{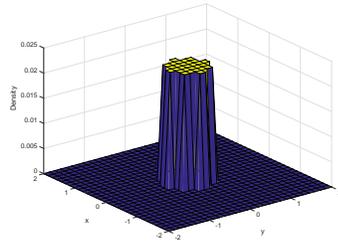}}\hspace{1cm}
\subfloat[  $\rho^1$.]{\includegraphics[scale=0.25]{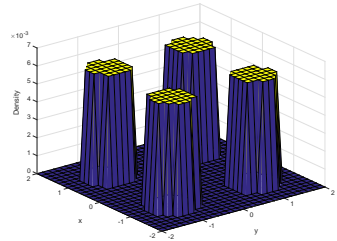}}\\
\subfloat[ Manhattan distance.]{\includegraphics[scale=0.3]{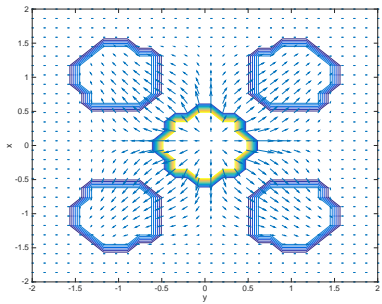}}\hspace{1cm}
\subfloat[ Euclidean distance.]{\includegraphics[scale=0.3]{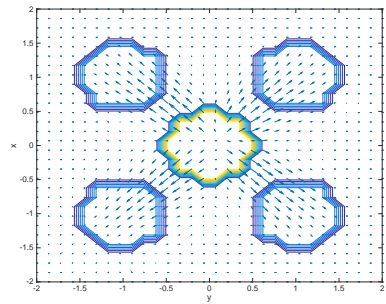}}
\caption{Comparison of minimizers $m(x)$ for EMD-$L_1$ and EMD-$L_2$. Here the initial measure is a uniform measure supported on a disk while the terminal measure is a uniform measure supported on four disjoint disks.}
\end{figure}

It is also worth mentioning that the quadratic perturbation in modified problem \eqref{W1modify} is necessary. We design the following two numerical results to demonstrate this. 
\begin{itemize}
\item[(i)] In table 4, we show that if we set $\epsilon=0$ in \eqref{W1modify}, the relative error can be enlarged if the total number of grids $N$ increases;
\item[(ii)] In table 5, we demonstrate that the minimizer of perturbed problem \eqref{W1modify} approximates one particular minimizer of \eqref{W1new} when $\epsilon$ approaches $0$. 
\end{itemize}

\begin{table}[H]
    \begin{tabular}{  | l  | p{3cm} |  }
    \hline  Grids number $N$   & Relative error \\ \hline
        400        &  $5.1\times 10^{-5}$    \\ \hline
         1600     &  $6.7\times 10^{-5}$   \\ \hline
         6400     &  $5.3\times 10^{-4}$   \\ \hline 
    \end{tabular}
      \caption{We compute EMD-$L_1$ in Figure 3 with $\epsilon=0$ and different meshes. The terminal condition is $\frac{1}{N}\sum_{i=1}^N|\textrm{div}_G(m^k_{i})+p_i^1-p_i^0|\leq 10^{-6}$.
      The relative error is computed by $\frac{|\|m^0\|_1- 0.8|}{0.8} $, where $m^0$ is the computed minimizer of \eqref{W1modify}.}  
\end{table}

\begin{table}[H]
    \begin{tabular}{  | l  | p{3cm} |  }
    \hline
    $\epsilon$   & Relative error \\ \hline
              $0.1$& $9.1\times 10^{-4} $ \\ \hline 
         $0.01$         &  $9.4\times 10^{-5}$ \\ \hline
      $0.001$          & $1.6\times 10^{-5}$  \\ \hline
      $0.0001$        & $6.0\times 10^{-6}$  \\ \hline 
    \end{tabular}
      \caption{We compute EMD-$L_1$ in Figure 3 with a fixed mesh and different values of $\epsilon$. The number of grid points is $N=1600$ and the terminal condition is $\frac{1}{N}\sum_{i=1}^N|\textrm{div}_G(m^k_{i})+p_i^1-p_i^0|\leq 10^{-6}$ .
      The relative error is computed by $ \frac{|\|m^\epsilon\|_1+\frac{\epsilon}{2}\|m^\epsilon\|_2^2- 0.8|}{0.8}$, where $m^\epsilon$ is the computed minimizer of \eqref{W1modify}.  }  
\end{table}

 \section{Conclusions}
To summarize, we applied a primal-dual algorithm to solve EMD with the $L_1$, $L_2$ ground metric. The algorithm inherits both key ideas in optimal transport theory and homogenous degree one regularized problems. Compared to current methods, our algorithm has following advantages:
\begin{itemize}
\item First, it leverages the structure of optimal transport, which transfers EMD into a $L_1$-type minimization. The new minimization contains only $N$ variables, which is much less than the original $N^2$ linear programming problem;
\item Second, it uses simple exact formulas at each iteration (including the shrink operator) and converges to a minimizer. 
\item Third, it will be very easy to parallelize and thus speed up the algorithm considerably.   
\end{itemize}

In addition, we consider a novel perturbed minimization
\begin{equation}\label{a}
\inf_{m}\{ \int_{\Omega}\|m(x)\|+\frac{\epsilon}{2}\|m\|_2^2 dx~ :~\nabla \cdot m(x)+\rho^1(x)-\rho^0(x)=0 \}\ ,
\end{equation}
to approximate EMD problem. Here $\|\cdot\|$ can be either the 1-norm or the 2-norm. 
In future work, we will study several theoretical properties of \eqref{a}, especially the relation between $m^\epsilon$ and $m$ when $\epsilon$ goes to $0$.

\end{document}